\documentclass{article}
\usepackage{graphicx}
\usepackage{amsmath}
\usepackage{amssymb}
\usepackage{amsthm}

\title{A Capillary Surface with No Radial Limits}
\author{C. Patric Mitchell \\
                       Department of Mathematics, Statistics \& Physics \\
                            Wichita State University \\
                            Wichita, Kansas, 67260-0033}

\def\Real{{\rm I\hspace{-0.2em}R}}
\def\Natural{{\rm I\hspace{-0.2em}N}}

\newcommand\myeq{\mathrel{\overset{\makebox[0pt]{\mbox{\normalfont\tiny\sffamily def}}}{=}}}

\date{ }

\begin{document}
\maketitle

\vspace*{3mm}

\begin{abstract}
In 1996,  Kirk Lancaster and David Siegel investigated the existence and behavior of radial limits at a corner 
of the boundary of the domain of solutions of capillary and other prescribed mean curvature problems
with contact angle boundary data.  In Theorem 3, they provide an example of a capillary surface in a unit disk $D$  which has no radial limits 
at $(0,0)\in\partial D.$   In their example, the contact angle ($\gamma$)  cannot be bounded away from zero and $\pi.$  
Here we consider a domain $\Omega$  with a convex corner at $(0,0)$  and find a capillary surface $z=f(x,y)$  in $\Omega\times\Real$  
which has no radial limits at $(0,0)\in\partial\Omega$  such that $\gamma$  is bounded away from $0$  and $\pi.$   
\end{abstract}

\newtheorem{thm}{Theorem}
\newtheorem{prop}{Proposition}
\newtheorem{cor}{Corollary}
\newtheorem{lem}{Lemma}
\newtheorem{rem}{Remark}
\newtheorem{example}{Example}

\section{Introduction}
Let $\Omega$  be a domain  in ${\Real}^{2}$  with locally Lipschitz boundary  and ${\cal O}=(0,0)\in \partial\Omega$  
such that $\partial\Omega\setminus \{ {\cal O} \}$  is a $C^{4}$  curve and $\Omega\subset B_{1}\left(0,1\right),$  
where $B_{\delta}\left({\cal N}\right)$  is the open ball in $\Real^{2}$  of radius $\delta$ about ${\cal N}\in {\Real}^{2}.$
Denote the unit exterior normal to $\Omega$  at $(x,y)\in\partial\Omega$  by $\nu(x,y)$  and let polar coordinates relative to ${\cal O}$  
be denoted by $r$ and $\theta.$   We shall assume there exists a $\delta^{*}\in (0,2)$  and $\alpha \in \left(0,\frac{\pi}{2}\right)$  
such that $\partial \Omega \cap B_{\delta^{*}}({\cal O})$   consists of the line segments 
\[
{\partial}^{+}\Omega^{*} = \{(r\cos(\alpha),r\sin(\alpha)):0\le r\le \delta^{*}\}
\]  
and 
\[
{\partial}^{-}\Omega^{*} = \{(r\cos(-\alpha),r\sin(-\alpha)):0\le r\le \delta^{*}\}.
\]
Set $\Omega^{*} = \Omega \cap B_{\delta^{*}}({\cal O}).$  
Let $\gamma:\partial\Omega\to [0,\pi]$  be given.  Let $\left(x^{\pm}(s),y^{\pm}(s)\right)$  be arclength parametrizations of 
$\partial^{\pm}\Omega$  with $\left(x^{+}(0),y^{+}(0)\right)=\left(x^{-}(0),y^{-}(0)\right)=(0,0)$  and set 
$\gamma^{\pm}(s)=\gamma\left(x^{\pm}(s),y^{\pm}(s)\right).$  

Consider the capillary problem of finding a function $f\in C^2(\Omega)\cap C^1(\overline{\Omega}\setminus\{{\cal O}\})$  
satisfying 
\begin{equation}
\label{CAP}
{\rm div}(Tf)=\frac{1}{2}f\ \ {\rm in}\ \Omega
\end{equation}
and 
\begin{equation}
\label{CAPBC}
Tf\cdot\nu=\cos\left(\gamma\right)\ {\rm on}\ \partial\Omega\setminus\{{\cal O}\},  
\end{equation}
where $Tf=\frac{\nabla f}{\sqrt{1+|\nabla f|^{2}}}.$   
We are interested in the existence of the radial limits $Rf(\cdot)$   of a solution $f$  of (\ref{CAP})--(\ref{CAPBC}),  where
\[
Rf(\theta)=\lim_{r \rightarrow 0^{+}} f(r\cos \theta, r\sin \theta), -\alpha < \theta < \alpha
\]
and $Rf(\pm \alpha)=\lim_{\partial^{\pm}\Omega^{*}\ni {\bf x} \rightarrow {\cal O}} f({\bf x}), {\bf x} = (x,y)$, 
which are the limits of the boundary values of $f$ on the two sides of the corner if these exist.  
In \cite{CEL1}, the following is proven: 

\begin{prop}
\label{ONE}
Let $f$ be a bounded solution to (\ref{CAP}) satisfying (\ref{CAPBC}) on $\partial^{\pm}\Omega^{*} \setminus \{{\cal O}\}$ 
which is discontinuous at ${\cal O}.$   If $\alpha > \pi/2$ then $Rf(\theta)$ exists for all $\theta \in (-\alpha,\alpha).$   
If $\alpha \le \pi/2$ and there exist constants $\underline{\gamma}^{\, \pm},
\overline{\gamma}^{\, \pm}, 0 \le \underline{\gamma}^{\, \pm} \leq \overline{\gamma}^{\, \pm} \le \pi,$   satisfying
\[ 
\pi - 2\alpha  < \underline{\gamma}^{+} + \underline{\gamma}^{-} \le  
\overline{\gamma}^{\, +} +  \overline{\gamma}^{\, -} < \; \pi + 2\alpha 
\]
so that $\underline{\gamma}^{\pm}\leq \gamma^{\pm}(s) \leq \overline{\gamma}^{\,  \pm}$
for all $s, 0<s<s_{0},$ for some $s_{0}$, then again $Rf(\theta)$ exists for
all $\theta \in (-\alpha, \alpha)$. 
\end{prop}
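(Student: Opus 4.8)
The plan is to localize the analysis to the sector $\Omega^{*}$ and to combine an a priori finite-area bound for the graph of $f$ with comparison (maximum principle) arguments built on the Concus--Finn inequalities; the mechanism that forces the radial limits to exist is a barrier argument on annular sub-sectors that shrink to ${\cal O}$.

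The first step is the a priori bound. Put $M=\sup_{\Omega}|f|$. Integrating $(\ref{CAP})$ against $f$ over $\Omega^{*}\setminus B_{\varepsilon}({\cal O})$, integrating by parts, using $(\ref{CAPBC})$ on the straight sides together with $|Tf|<1$ on the circular arcs $r=\delta^{*}$ and $r=\varepsilon$, and the identity $\nabla f\cdot Tf=\sqrt{1+|\nabla f|^{2}}-1/\sqrt{1+|\nabla f|^{2}}$, then letting $\varepsilon\to0^{+}$, one obtains
\[
\int_{\Omega^{*}}\sqrt{1+|\nabla f|^{2}}\,dx\,dy\;\le\;|\Omega^{*}|+(2+\pi)\,\delta^{*}M\;<\;\infty .
\]
Hence the graph of $f$ over $\Omega^{*}$ is a surface of finite area, while $|{\rm div}(Tf)|=\tfrac12|f|\le\tfrac12 M$ is bounded. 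By Fubini this already gives, for a.e.\ $\theta\in(-\alpha,\alpha)$, the finiteness of $\int_{0}^{\delta^{*}}\sqrt{1+|\nabla f(s\cos\theta,s\sin\theta)|^{2}}\,s\,ds$; but, as the example in this paper shows, finite area alone is consistent with radial limits failing to exist, so the hypotheses on $\alpha$ and $\gamma$ must enter in an essential way.

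The crux is the second step. Fix $\theta_{0}\in(-\alpha,\alpha)$ and suppose $\limsup_{s\to0^{+}}f(s\cos\theta_{0},s\sin\theta_{0})>\liminf_{s\to0^{+}}f(s\cos\theta_{0},s\sin\theta_{0})$. Choose $c$ strictly between these and interlaced sequences $s_{k},s_{k}'\downarrow0$ with $f>c$ at $(s_{k}\cos\theta_{0},s_{k}\sin\theta_{0})$ and $f<c$ at $(s_{k}'\cos\theta_{0},s_{k}'\sin\theta_{0})$. On each annular sub-sector $\Omega^{*}\cap\{s_{k+1}'<r<s_{k}\}$ one compares $f$ with vertically shifted Concus--Finn wedge sub- and supersolutions of $(\ref{CAP})$--$(\ref{CAPBC})$ on the infinite wedge $W_{\alpha}=\{r>0,\ |\theta|<\alpha\}$. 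When
\[
\pi-2\alpha\;<\;\underline{\gamma}^{+}+\underline{\gamma}^{-}\;\le\;\overline{\gamma}^{\,+}+\overline{\gamma}^{\,-}\;<\;\pi+2\alpha ,
\]
such functions exist: they are continuous on $\overline{W_{\alpha}}\setminus\{{\cal O}\}$ (though in general discontinuous at ${\cal O}$, which is exactly why $f$ itself may jump there), they have $Tf\cdot\nu-\cos\gamma$ of the sign required for comparison on $\partial^{\pm}\Omega^{*}$, and their radial limits can be prescribed within an interval whose length is governed by the slack in the inequality just displayed. Sliding such barriers vertically pinches the radial oscillation of $f$ along $\theta=\theta_{0}$ down to zero, contradicting the choice of $\theta_{0}$; hence $Rf(\theta)$ exists for every $\theta\in(-\alpha,\alpha)$, and a version of the scheme uniform in $\theta$ gives $Rf(\pm\alpha)$ as well. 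When $\alpha>\pi/2$ the wedge $W_{\alpha}$ is wide enough that the required comparison functions exist with \emph{no} restriction on $\gamma$, which is the geometric content of the first alternative; when $\alpha\le\pi/2$ the displayed inequalities are precisely the condition under which they can be built.

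The step I expect to be the main obstacle is the construction and deployment of these barriers. One must produce, under the displayed inequalities alone (in particular allowing $|\gamma^{+}-\gamma^{-}|\ge2\alpha$, so that genuine discontinuity of $f$ at ${\cal O}$ is not excluded), sub- and supersolutions of $(\ref{CAP})$--$(\ref{CAPBC})$ on $W_{\alpha}$ that are themselves permitted to be discontinuous at the vertex and whose radial limits at $\theta_{0}$ can be placed at any prescribed value within a positive-length interval; this is where the hypothesis is genuinely spent. One must then apply the comparison principle on annuli that \emph{degenerate} to ${\cal O}$, which forces one to control $f$ on the bounding circles $r=s_{k}$ starting only from pointwise information at $\theta_{0}$; typically this is bridged by first showing that $Rf$ exists almost everywhere and is piecewise monotone (using the finite area of the first step, interior gradient estimates valid away from ${\cal O}$, and the structure of the equation) and only afterward removing the exceptional set of directions. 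Finally, since $(\ref{CAP})$ is not dilation-invariant (the zeroth-order term $\tfrac12 f$), one cannot reduce the family of shrinking annuli to a single blown-up wedge problem; this is handled by keeping the bound $M$ explicit throughout and treating $\tfrac12 f$ as a bounded inhomogeneity contributing errors that vanish as the annuli shrink.
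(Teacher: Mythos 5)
Your first step (the finite--area bound for the graph over $\Omega^{*}$, obtained by integrating ${\rm div}(Tf)=\frac12 f$ against $f$ and using $|Tf|<1$ and $|f|\le M$) is fine and is indeed how arguments of this type begin. But the second step, which you yourself identify as the crux, is a genuine gap rather than a proof outline: you assert the existence of Concus--Finn wedge sub- and supersolutions that are bounded, possibly discontinuous at the vertex, satisfy the contact--angle inequalities needed for comparison on both sides $\partial^{\pm}\Omega^{*}$ under only the hypothesis $\pi-2\alpha<\underline{\gamma}^{+}+\underline{\gamma}^{-}\le\overline{\gamma}^{\,+}+\overline{\gamma}^{\,-}<\pi+2\alpha$, \emph{and} whose radial limit at a prescribed interior direction $\theta_{0}$ can be placed at an arbitrary value in a positive-length interval. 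No such family is constructed, and none is available in the literature: the explicit wedge comparison functions (e.g.\ the function $w$ used in the Lemma of this paper) are unbounded at the vertex and have rigid asymptotics, so their radial limits cannot be ``prescribed.'' Moreover, even granting such barriers, the comparison on the annular sub-sectors $\{s_{k+1}'<r<s_{k}\}$ requires control of $f$ on the full circular arcs bounding these regions, whereas you only have pointwise information along the single ray $\theta=\theta_{0}$; your proposed bridge (first proving that $Rf$ exists a.e.\ and is piecewise monotone) is not supplied, does not follow from the finite-area bound (as you note, finite area is consistent with total failure of radial limits), and is essentially the content of the proposition itself. So the hypotheses on $\alpha$ and $\gamma$ are never actually ``spent'' in a verifiable step.

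For comparison: the present paper does not prove Proposition \ref{ONE}; it quotes it from \cite{CEL1}, and the proofs there and in \cite{LS1,Lan:89} use a different mechanism. One passes to a parametric (isothermal) representation of the graph over a neighborhood of the corner, using the finite-area bound to obtain a generalized surface of prescribed mean curvature; the Courant--Lebesgue lemma and boundary regularity then show the parametrization extends continuously to the closed parameter disk, with the corner ${\cal O}$ corresponding to a boundary arc, and the radial limits $Rf(\theta)$ are read off from the behavior of the height component along that arc. The Concus--Finn-rectangle hypotheses (or $\alpha>\pi/2$) enter at that stage to control the unit normal along the arc over ${\cal O}$ and rule out the degenerate behavior that would destroy the radial limits --- not through a vertical sliding of explicit barriers on shrinking annuli. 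If you want to salvage your approach, the missing barrier construction and the boundary control on the shrinking arcs are exactly the points you would have to supply, and they are of comparable difficulty to the parametric argument they would replace.
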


\noindent In \cite{LS1}, Lancaster and Siegel proved this theorem with the additional restriction that $\gamma$  be bounded away from 
$0$  and $\pi;$  Figure \ref{FigOne} illustrates these cases.  

\begin{figure}[ht]
\centering
\includegraphics{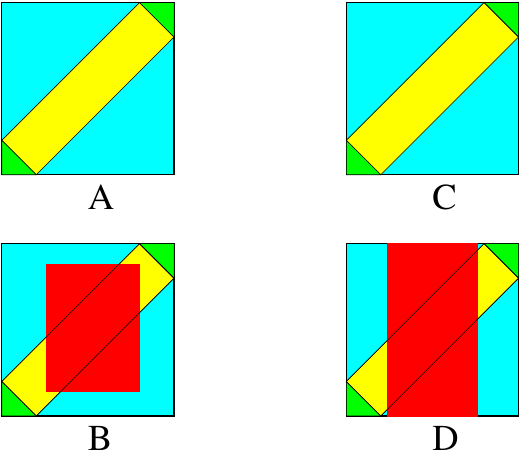}
\caption{The Concus-Finn rectangle (A \& C) with regions ${\cal R}$ (yellow), $D_{2}^{\pm}$ (blue) and $D_{1}^{\pm}$ (green);  the 
restrictions on $\gamma$  in \cite{LS1} (red region in B) and in \cite{CEL1} (red region in D). \label{FigOne} }
\end{figure} 

\noindent In Theorem 3 of \cite{LS1}, Lancaster and Siegel also proved 

\begin{prop}
\label{TWO}
Let $\Omega$ be the disk of radius $1$ centered at $(1,0).$
Then there exists a solution to $Nf = \frac{1}{2} f$ in $\Omega,
|f| \leq 2, f \in C^{2}(\Omega) \cap C^{1}( \overline{\Omega} \setminus O),
O = (0,0)$ so that no radial limits $Rf(\theta)$ exist ($\theta \in [ -\pi/2,\pi/2 ]$).
\end{prop}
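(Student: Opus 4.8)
**The plan is to construct $f$ explicitly (or semi-explicitly) as a limit of solutions whose boundary values oscillate in a controlled way.**The plan is to realize $f$ as the solution of a \emph{Dirichlet} problem for \eqref{CAP} on $\Omega=B_1((1,0))$ whose boundary values are bounded but oscillate infinitely often between two fixed levels as the argument tends to $O=(0,0)$, and then to extract the non--existence of radial limits by a blow--up at $O$. The structural point is that $O$ is a \emph{smooth} boundary point: the two ``sides'' of $\partial\Omega$ at $O$ share the tangent line $\{x=0\}$, so this is the borderline case $\alpha=\pi/2$ of Proposition~\ref{ONE}. By that proposition, killing all radial limits forces the contact angle $\gamma(x)=\arccos(Tf(x)\cdot\nu(x))$ to approach $0$ and $\pi$ along $\partial\Omega$ near $O$; this is automatic for an oscillating trace, since where the boundary values vary steeply $|\nabla f|\to\infty$, hence $Tf\cdot\nu\to\pm1$ and $\gamma\to0$ or $\pi$. (This is exactly the defect the present paper must avoid.)

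First I fix the data. Parametrizing a boundary neighborhood of $O$ by arclength $r\in[0,\delta]$, fix $\mu\in(0,1)$ and choose $\psi\in C^\infty(\partial\Omega\setminus\{O\})$, symmetric in $y$, with $|\psi|\le 1$, equal to $+1$ on the arcs $\{\,r\in[\mu^{2k+1},\mu^{2k}]\,\}$ and to $-1$ on the arcs $\{\,r\in[\mu^{2k+2},\mu^{2k+1}]\,\}$, with short smooth transition ramps near the endpoints, and $\psi\equiv 0$ on the remainder of $\partial\Omega$. To solve the Dirichlet problem, replace the right side of \eqref{CAP} by the bounded, nondecreasing function $\tfrac12\operatorname{med}(-1,z,1)$ of $z$; since $\Omega$ is a disk whose boundary curvature, $1$, exceeds the resulting a priori bound $\tfrac12$ on $|\operatorname{div}(Tf)|$, the Dirichlet problem with continuous data on $\partial\Omega\setminus B_\varepsilon(O)$ --- interpolating $\psi$ across the deleted arc inside $[-1,1]$ --- is uniquely solvable by standard prescribed--mean--curvature theory. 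At an interior extremum of such a solution $\operatorname{div}(Tf)=\Delta f$ has sign opposite to $\tfrac12 f$, so comparison with the constants $\pm1$ gives $|f|\le 1\le 2$; hence the truncation is inactive and \eqref{CAP} holds. Letting $\varepsilon\downarrow 0$, and using interior estimates together with boundary regularity at the smooth points of $\partial\Omega$ (where $\psi$ and $\partial\Omega$ are smooth), these solutions converge locally uniformly on $\overline\Omega\setminus\{O\}$ to the desired $f\in C^2(\Omega)\cap C^1(\overline\Omega\setminus\{O\})$ with $|f|\le 2$ and boundary trace $\psi$ off $O$.

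Now I show $f$ has no radial limit at $O$. For $\theta=\pm\pi/2$ it is immediate, since there $f$ equals $\psi$, which oscillates between $+1$ and $-1$. For $\theta\in(-\pi/2,\pi/2)$ I blow up at $O$: because $O$ is smooth and the data were placed self--similarly in $r=\mu^k$, the rescaled functions $f_j(x):=f(\mu^{2j}x)$ solve $\operatorname{div}(Tf_j)=\tfrac{\mu^{2j}}{2}f_j$ on domains exhausting the half--plane $H:=\{\,(x,y):x>0\,\}$, are bounded by $1$, and --- away from $\partial H$ --- have locally uniformly bounded gradients by the interior gradient estimate for the capillary equation; a subsequence thus converges in $C^2_{\mathrm{loc}}(H)$ to a bounded solution $f_\infty$ of the minimal surface equation on $H$ that is invariant under $x\mapsto\mu^2 x$ and symmetric in $y$. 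By the boundary analysis discussed below $f_\infty$ is continuous up to the open ``$+1$'' and ``$-1$'' arcs of $\partial H$ with those boundary values, hence is non--constant; and a non--constant, $y$--symmetric solution invariant under $x\mapsto\mu^2 x$ must be non--constant along \emph{every} ray $\{\,\rho(\cos\beta,\sin\beta):\rho>0\,\}\subset H$ --- in $(\log\rho,\beta)$--coordinates the invariance is periodicity in $\log\rho$, and the symmetry forbids a ray on which all the non--constant modes vanish. Letting $2\eta(\beta)>0$ be the resulting oscillation and transferring back through $f\bigl(r(\cos\theta,\sin\theta)\bigr)=f_j\bigl(\mu^{-2j}r(\cos\theta,\sin\theta)\bigr)$, for each fixed $\theta$ the function $r\mapsto f(r\cos\theta,r\sin\theta)$ has two distinct subsequential limits as $r\downarrow 0$ (the two values of $f_\infty$ realizing the oscillation along that ray), so $Rf(\theta)$ does not exist. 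With the $\theta=\pm\pi/2$ case this proves the proposition.

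The main obstacle is exactly the boundary behavior of the blow--up: one must show, \emph{uniformly in scale}, that the oscillating data are inherited by $f_\infty$ --- equivalently, that the rescaled solutions $f_j$ enjoy a scale--independent boundary modulus of continuity up to the smooth portions of $\partial(\mu^{-2j}\Omega)$, so that the trace $\psi$ passes to the limit and $f_\infty$ cannot collapse to a constant. The self--similar placement $r=\mu^k$ is what makes this plausible: the local picture over each arc is a rescaled copy of one model, so it suffices to bound things once, on a model half--disk whose boundary data are $+1$ on a fixed arc and $\ge -1$ elsewhere, by a comparison (sub/supersolution) argument for the capillary equation --- and since the rescaled right--hand sides $\tfrac{\mu^{2j}}{2}f_j\to 0$, the estimates only improve as one zooms in. Carrying out this comparison, in particular ruling out that $f$ relaxes to an intermediate value over the shrinking arcs (this needs a genuine curved barrier near each arc, plus a local barrier at the finitely many boundary points where the limiting data jumps), is the technical heart of the proof. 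A secondary difficulty is the directions $\theta$ near $\pm\pi/2$: there interior gradient estimates are unavailable, and one compares $f$ with its oscillating boundary trace by the maximum principle on a thin lens between the ray and $\partial\Omega$.
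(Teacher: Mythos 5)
Your construction is genuinely different from the one in the literature (the paper does not reprove Proposition~\ref{TWO}; it quotes Theorem~3 of \cite{LS1}, whose proof --- imitated in this paper's proof of Theorem~\ref{THREE} --- builds $f$ as a limit of an inductively perturbed sequence of \emph{contact-angle} problems via a localization lemma, each successive solution blowing up to $+\infty$ or $-\infty$ at ${\cal O}$ while changing by less than a prescribed $\eta_n$ on $\Omega\setminus B_{\delta_n}({\cal O})$), but as written your blow-up argument has two concrete gaps. First, the rescaling you use is not a symmetry of the operator: if $f_j(x)=f(\mu^{2j}x)$, then a direct computation gives $\operatorname{div}\bigl(\nabla f_j/\sqrt{\mu^{4j}+|\nabla f_j|^{2}}\bigr)=\tfrac{\mu^{2j}}{2}f_j$, \emph{not} $\operatorname{div}(Tf_j)=\tfrac{\mu^{2j}}{2}f_j$. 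As $j\to\infty$ this family degenerates toward the $1$-Laplacian, for which there is no interior gradient estimate and no $C^{2}_{\rm loc}$ compactness; limits of such rescalings are BV/least-gradient--type objects that can jump, so your claimed convergence to a bounded solution $f_\infty$ of the minimal surface equation on the half-plane is unjustified. The isotropic rescaling $\lambda^{-1}f(\lambda x)$, which does preserve the operator (up to the harmless factor $\lambda^{2}/2$ on the right), destroys the uniform bound $|f|\le 1$ on which your compactness and nontriviality arguments rest. Second, even granting a limit, the two decisive facts --- that $f_\infty$ inherits the oscillating trace (non-collapse to a constant, uniformly in scale) and that a nonconstant, $y$-symmetric solution invariant under $x\mapsto\mu^{2}x$ must oscillate along \emph{every} ray --- are asserted, not proved. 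You acknowledge the first is ``the technical heart,'' and it is exactly where the difficulty lies: for prescribed-mean-curvature Dirichlet problems, boundary data on short arcs can fail to be attained or can be lost in limits, so without explicit barriers the blow-up limit could well be constant, in which case your argument yields nothing. The second rests on a Fourier-mode heuristic in $(\log\rho,\beta)$ coordinates that has no meaning for the nonlinear minimal surface equation; what you actually need is nonconstancy of $f_\infty$ along the specific ray of direction $\theta$, and no mechanism for that is given.

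By contrast, the approach of \cite{LS1} (and of Theorem~\ref{THREE} here) avoids all boundary-trace and blow-up issues: one works with contact-angle data, uses a localization lemma to produce solutions $f_n$ that tend alternately to $\pm\infty$ at ${\cal O}$ while staying $\eta_n$-close to $f_{n-1}$ on $\Omega_{\delta_n}$, records points $r_j\downarrow 0$ with $(-1)^{j}f_n(r_j,0)>1$ preserved at every later stage by the choice of $\eta_{n+1}$, passes to a compactness limit, and kills the remaining directions $\theta\neq 0$ by the symmetry-plus-continuity argument. If you want to salvage your Dirichlet-data route, you would need, at minimum, scale-invariant sub/supersolution barriers pinning $f$ near $\pm 1$ at interior points at distance comparable to $\mu^{k}$ from the corresponding arcs --- which would already give the oscillation along $\theta=0$ directly, without any blow-up --- together with a separate argument (such as the symmetry trick above) for $\theta\neq 0$; the blow-up machinery as you have set it up cannot be made to work in its present form.
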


\noindent In this case, $\alpha=\frac{\pi}{2};$  if $\gamma$  is bounded away from $0$  and $\pi,$  then Proposition \ref{ONE}
would imply that $Rf(\theta)$  exists for each  $\theta \in \left[-\frac{\pi}{2},\frac{\pi}{2}\right]$  and therefore 
the contact angle $\gamma = \cos^{-1}\left(Tf\cdot\nu\right)$  in Proposition \ref{TWO} is not bounded away from $0$  and $\pi.$

In our case, the domain $\Omega$  has a convex corner of size $2\alpha$  at ${\cal O}$  and we wish to investigate the question 
of whether an example like that in Proposition \ref{TWO} exists in this case when $\gamma$  is bounded away from $0$  and $\pi.$  
In terms of the Concus-Finn rectangle, the question is whether, given $\epsilon>0,$  there is a 
$f\in C^2(\Omega)\cap C^1(\overline{\Omega}\setminus\{ {\cal O} \} )$  of (\ref{CAP})--(\ref{CAPBC}) such that 
no radial limits $Rf(\theta)$ exist ($\theta \in [-\alpha,\alpha]$)  and $\vert\gamma-\frac{\pi}{2}\vert\le \alpha+\epsilon;$
this is illustrated in Figure \ref{FigTwo}.

\begin{figure}[ht]
\centering
\includegraphics{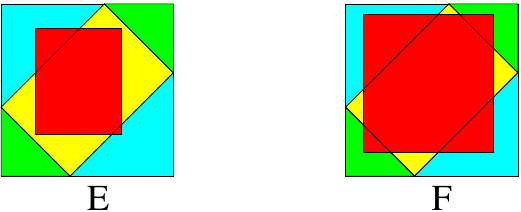}
\caption{The Concus-Finn rectangle. When $\gamma$  remains in red region in E, $Rf(\cdot)$  exists; $\gamma$  in Theorem \ref{THREE} 
remains in the red region in F . \label{FigTwo} }
\end{figure} 

\begin{thm}  
\label{THREE}
For each $\epsilon>0$, there is a domain $\Omega$ as described above and a solution 
$f\in C^2(\Omega)\cap C^1(\overline{\Omega}\setminus\{ {\cal O} \} )$  of (\ref{CAP}) such that 
the contact angle $\gamma = \cos^{-1}\left(Tf \cdot \nu\right):\partial\Omega\setminus\{ {\cal O}\} \to [0,\pi]$  
satisfies $\vert\gamma-\frac{\pi}{2}\vert\le \alpha+\epsilon$  and there exist a sequence $\{r_{j}\}$  in $(0,1)$  
with $\lim_{j\to\infty} r_{j}=0$  such that 
\[
(-1)^{j}f\left(r_{j},0\right)>1 \ \ \ \ {\rm for \ each \ } j\in\Natural.
\]
Assuming $\Omega$  and $\gamma$  are symmetric with respect to the line $\{(x,0):x\in\Real\},$  this implies that  
no radial limit 
\begin{equation}
\label{Rads}
Rf(\theta) \myeq \lim_{r\downarrow 0} f(r\cos(\theta),r\sin(\theta))
\end{equation} 
exists for any $\theta\in[-\alpha,\alpha].$   
\end{thm}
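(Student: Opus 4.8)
\bigskip
\noindent\emph{Sketch of proof.}
The plan is to prescribe a contact angle on $\partial\Omega\setminus\{\mathcal{O}\}$ that oscillates, as the corner is approached, between a value slightly inside the ``wetting'' Concus--Finn region and one slightly inside the ``drying'' region, to solve the resulting capillary problem variationally, and then to force the solution to overshoot $1$ and $-1$ alternately along the bisector $\theta=0$ by comparison with Concus--Finn wedge barriers on a rapidly shrinking sequence of corner neighbourhoods. Fix any $\alpha\in(0,\tfrac\pi2)$; shrinking $\epsilon$ we may assume $\alpha+\epsilon<\tfrac\pi2$. Put $\gamma_-=\tfrac\pi2-\alpha-\tfrac\epsilon2$ and $\gamma_+=\tfrac\pi2+\alpha+\tfrac\epsilon2$, so that $\gamma_-<\tfrac\pi2-\alpha$, $\gamma_+>\tfrac\pi2+\alpha$, and $[\gamma_-,\gamma_+]\subset[\tfrac\pi2-\alpha-\epsilon,\tfrac\pi2+\alpha+\epsilon]$; then $\beta:=\cos\gamma$ will satisfy $\|\beta\|_\infty\le\sin(\alpha+\tfrac\epsilon2)<1$ for any $\gamma$ valued in $[\gamma_-,\gamma_+]$. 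Choose radii $\delta^*=\rho_0>\rho_1>\rho_2>\cdots\downarrow 0$, decreasing as rapidly as the comparison estimates below demand, and define $\gamma$ on $\partial^{+}\Omega^{*}\cup\partial^{-}\Omega^{*}$ to equal $\gamma_-$ on the bands $\{\rho_{2k}\le r\le\rho_{2k-1}\}$ and $\gamma_+$ on the bands $\{\rho_{2k+1}\le r\le\rho_{2k}\}$, joined by a $C^4$ interpolation through values in $[\gamma_-,\gamma_+]$ on thin transition sub-bands, and take $\gamma$ invariant under $\theta\mapsto-\theta$. Complete $\partial^{+}\Omega^{*}\cup\partial^{-}\Omega^{*}$ to the boundary of a bounded domain $\Omega$ of the kind described in the Introduction, symmetric about $\{y=0\}$, and extend $\gamma$ to a $C^4$ function on $\partial\Omega\setminus\{\mathcal{O}\}$ with, say, $\gamma\equiv\tfrac\pi2$ off $\overline{\Omega^*}$. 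Then $|\gamma-\tfrac\pi2|\le\alpha+\tfrac\epsilon2<\alpha+\epsilon$ everywhere on $\partial\Omega\setminus\{\mathcal{O}\}$.

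To obtain $f$, minimize the capillary energy
\[ J[u]=\int_\Omega\sqrt{1+|\nabla u|^2}\,dx+\tfrac14\int_\Omega u^2\,dx-\int_{\partial\Omega}\beta\,u\,d\mathcal{H}^1 \]
over $BV(\Omega)$. Since $\|\beta\|_\infty<1$, $J$ is coercive and lower semicontinuous by the classical theory, so it has a (unique, by strict convexity) minimizer $f$. Interior regularity gives $f\in C^2(\Omega)$ with $\mathrm{div}(Tf)=\tfrac12 f$, and boundary regularity at the smooth points of $\partial\Omega\setminus\{\mathcal{O}\}$, where $|\beta|<1$, gives $f\in C^1(\overline\Omega\setminus\{\mathcal{O}\})$ with $Tf\cdot\nu=\cos\gamma$ there; hence $\cos^{-1}(Tf\cdot\nu)=\gamma$ satisfies $|\gamma-\tfrac\pi2|\le\alpha+\epsilon$. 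Uniqueness together with the symmetry of $(\Omega,\beta)$ forces $f(x,y)=f(x,-y)$, and comparison with constants (or the classical a priori height bound for $\mathrm{div}(Tf)=\tfrac12 f$) gives $|f|\le M$ for a fixed constant $M$.

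The heart of the argument is to show $f$ oscillates along $\{(r,0):0<r<\delta^*\}$. On a ``lifting'' band $\{\rho_{2k}\le r\le\rho_{2k-1}\}$ one has $\gamma\equiv\gamma_-<\tfrac\pi2-\alpha$, so the Concus--Finn criterion is violated there; let $\psi_k$ be the Concus--Finn corner comparison function for the wedge $\{|\theta|<\alpha\}$ with contact angle $\gamma_-$, adapted to the equation $\mathrm{div}(Tu)=\tfrac12 u$ and to the bound $|u|\le M$, so that $\mathrm{div}(T\psi_k)\ge\tfrac12\psi_k$, $T\psi_k\cdot\nu\le\cos\gamma_-$ on the two walls, and $\psi_k(r,\theta)\to+\infty$ as $r\downarrow0$. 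Since $\psi_k$ is bounded on $\{r=\rho_{2k-1}\}$, one may subtract a constant $C_k$, depending only on $\rho_{2k-1}$ and $M$, so that $\psi_k-C_k$ lies below $f$ on the outer arc; the comparison principle then yields $f\ge\psi_k-C_k$ on the part of the band where $\psi_k-C_k$ remains an admissible subsolution, and, because $\psi_k(r,0)\to+\infty$ while $C_k$ is tied to the scale $\rho_{2k-1}$, this gives $f(r_{2k},0)>1$ for a suitable $r_{2k}\in(\rho_{2k},\rho_{2k-1})$ once $\rho_{2k}/\rho_{2k-1}$ is small enough that the growth of $\psi_k$ over the band outpaces $C_k$. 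The mirror construction on the ``drying'' band $\{\rho_{2k+1}\le r\le\rho_{2k}\}$, where $\gamma\equiv\gamma_+>\tfrac\pi2+\alpha$, uses the Concus--Finn barrier tending to $-\infty$ and produces $r_{2k+1}\in(\rho_{2k+1},\rho_{2k})$ with $f(r_{2k+1},0)<-1$; altogether this yields a sequence $r_j\downarrow0$ with $(-1)^j f(r_j,0)>1$. \emph{The main obstacle is exactly this step}: to make the comparison rigorous one must exhibit, for each band, a bona fide sub- or supersolution of the \emph{mixed} (contact-angle) boundary value problem that respects the prescribed $\gamma$ not only on its own band but also on the neighbouring bands, where $\gamma$ takes the opposite extreme value, so that neither constants nor crude truncations of $\psi_k$ are admissible; reconciling the size of $\psi_k$ near $\mathcal{O}$ with the a priori bound $|f|\le M$ at the inner edge of the band is the delicate point, and it is what dictates how fast $\rho_j\to0$ must be chosen. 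Adapting the classical Concus--Finn wedge comparison functions (built for constant mean curvature) to $\mathrm{div}(Tf)=\tfrac12 f$, and patching across the transition sub-bands, are the remaining technical ingredients.

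Finally, the oscillation just established shows that $Rf(0)$ does not exist. If $Rf(\theta_0)$ existed for some $\theta_0\in[-\alpha,\alpha]\setminus\{0\}$, then by the symmetry $f(x,y)=f(x,-y)$ so would $Rf(-\theta_0)$, with the same value; by the structure of radial limits of bounded capillary solutions established in \cite{LS1} (if $Rf$ exists at $\theta_1<\theta_2$ then it exists, and is continuous, on all of $[\theta_1,\theta_2]$), $Rf$ would then exist throughout $[-|\theta_0|,|\theta_0|]$, in particular at $0$, a contradiction. Hence no radial limit $Rf(\theta)$ exists for any $\theta\in[-\alpha,\alpha]$. $\blacksquare$
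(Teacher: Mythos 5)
There is a genuine gap, and it sits exactly where you flag it — but it is worse than a missing technicality: the band-wise barrier comparison, as you have set it up, cannot be closed. Your comparison region is the annular band $\{\rho_{2k}<r<\rho_{2k-1},\,|\theta|<\alpha\}$, and the comparison principle for the capillary operator requires you to dominate the barrier by $f$ on \emph{both} circular arcs, since on the inner arc $\{r=\rho_{2k}\}$ you have neither a contact-angle inequality nor any information about $f$ beyond the putative bound $f\ge -M$ (itself unjustified up to $\mathcal{O}$: your $\gamma$ leaves the Concus--Finn range on infinitely many bands, and comparison with constants or the standard height estimates does not give a uniform bound at the corner). But the Concus--Finn function $\psi_k\sim c/r$ attains its maximum over the band on the inner arc, so choosing $C_k$ large enough that $\psi_k-C_k\le f$ there forces $\psi_k-C_k\le\inf_{\{r=\rho_{2k}\}}f$ throughout the band, and the comparison yields only the vacuous conclusion $f\ge\psi_k-C_k$ with a right-hand side that never exceeds $-M$. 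No choice of $\rho_{2k}/\rho_{2k-1}$ repairs this: shrinking the inner radius raises $\psi_k$ on the inner arc and hence raises $C_k$ by exactly the amount you hoped to gain. Blow-up barriers only bite when the extreme contact angle persists all the way to the corner, so that the comparison region is a full corner neighborhood and there is no inner arc to control.

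This is precisely why the paper does not prescribe a single oscillating $\gamma$. Its localization lemma produces, from any current solution $h$, a \emph{new} solution $g$ whose contact angle equals $\frac{\pi}{2}-\alpha-\epsilon$ (or, mirrored, $\frac{\pi}{2}+\alpha+\epsilon$) on all of $\partial\Omega\cap B_{\beta}(\mathcal{O})$, so that $g\to\pm\infty$ at $\mathcal{O}$ by the wedge barrier on a full corner neighborhood, while $\sup_{\Omega_{\delta}}|g-h|<\eta$. Iterating with $\eta_{n+1}=\min_{1\le j\le n}|f_{n}(r_{j},0)-(-1)^{j}|$ preserves the inequalities $(-1)^{j}f_{n}(r_{j},0)>1$ already achieved, and a compactness argument (as in \cite{LS1,Siegel}) gives a limit solution $f$ carrying the oscillation along $\{\theta=0\}$; the contact angle of $f$ is whatever the limit produces, constrained only by $|\gamma-\frac{\pi}{2}|\le\alpha+\epsilon$, rather than fixed in advance. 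Separately, your final step quotes \cite{LS1} for ``radial limits at $\theta_{1}<\theta_{2}$ imply radial limits on $[\theta_{1},\theta_{2}]$,'' which is not available for a possibly unbounded $f$ with this boundary data; the paper argues instead that if $Rf(\pm\theta_{0})$ exist (equal by symmetry), then $f$ restricted to the sector between $\pm\theta_{0}$ has continuous boundary values, hence is continuous on the closed sector, so $Rf(0)$ would exist — contradicting the oscillation at the points $r_{j}$. To make your write-up correct you would need to abandon the fixed banded $\gamma$ and rebuild the construction along these iterative lines.
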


\noindent We note that our Theorem is an extension of Theorem 3 of \cite{LS1} to contact angle data in a domain with a convex corner. 
As in \cite{Lan:89,LS1}, we first state and prove a localization lemma; this is analogous to the Lemma in \cite{Lan:89} and 
Lemma 2 of \cite{LS1}.

\begin{lem}
\label{LEM}
Let $\Omega\subseteq{\Real}^{2}$  be as above, $\epsilon >0,$  $\eta>0$
and $\gamma_{0}:\partial\Omega\setminus\{ {\cal O}\} \to [0,\pi]$  
such that  $\vert\gamma_{0}-\frac{\pi}{2}\vert\le \alpha+\epsilon.$
For each $\delta\in(0,1)$   and $h\in C^2(\Omega)\cap C^1(\overline{\Omega}\setminus\{{\cal O}\})$  which satisfies (\ref{CAP}) and 
(\ref{CAPBC})  with $\gamma=\gamma_{0},$   
there exists a solution $g\in C^2(\Omega)\cap C^1(\overline{\Omega}\setminus\{{\cal O}\})$  of (\ref{CAP}) such that 
$\lim_{\overline\Omega\ni (x,y)\to (0,0) } g(x,y)=+\infty,$   
\begin{equation}
\label{SUP}
\sup_{\Omega_{\delta}}  \vert g-h\vert<\eta \ \ \ \ {\rm and} \ \ \ \ \left\vert\gamma_{g}-\frac{\pi}{2}\right\vert\le \alpha+\epsilon,
\end{equation}
where $\Omega_{\delta} = \overline\Omega\setminus B_{\delta}\left({\cal O}\right)$  and 
$\gamma_{g}=\cos^{-1}\left(Tg\cdot \nu\right):\partial\Omega\setminus\{{\cal O}\}\to [0,\pi]$  is the contact angle 
which the graph of $g$  makes with $\partial\Omega\times\Real.$  
\end{lem}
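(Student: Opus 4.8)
The plan is to build $g$ as a perturbation of $h$ that is forced to blow up at $\mathcal O$ by adding a comparison-type barrier supported near the corner, then to solve a boundary value problem on a cut-off domain and glue. First I would fix a small $\rho\in(0,\delta)$ and construct on $\Omega\cap B_\rho(\mathcal O)$ an explicit subsolution-type function $\psi_\rho$ of (\ref{CAP}) which tends to $+\infty$ as $(x,y)\to\mathcal O$, whose contact angle stays inside $|\gamma-\tfrac\pi2|\le\alpha+\epsilon$ on $\partial^\pm\Omega^*\cap B_\rho$, and which is comparatively tame (bounded, with controlled gradient) on the circular arc $\Omega\cap\partial B_\rho(\mathcal O)$. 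The convex corner ($\alpha<\pi/2$) is exactly what makes such a local blow-up object available: a function of the form $z=\kappa\log(1/r)$-type or a piece of a lower hemisphere-like cap, adapted to the PDE ${\rm div}(Tf)=\tfrac12 f$, can be made to satisfy the capillary inequality with any prescribed near-$\pi/2$ contact angle when the opening is wide enough, and the admissible angle window $(\pi-2\alpha,\pi+2\alpha)$ for the Concus--Finn condition is nonempty and contains an $(\alpha+\epsilon)$-neighborhood of $\tfrac\pi2$.

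Next I would set up the Dirichlet-type problem for $g$ on $\Omega$: prescribe $g=h$ (or a smoothing thereof) on $\partial\Omega\setminus B_\rho(\mathcal O)$, impose the contact angle condition $Tg\cdot\nu=\cos\gamma_g$ with $\gamma_g$ chosen equal to $\gamma_0$ away from the corner and interpolating to the barrier's angle near $\mathcal O$, and force $g\ge\psi_\rho$ near the corner. Existence of such a $g\in C^2(\Omega)\cap C^1(\overline\Omega\setminus\{\mathcal O\})$ follows from the standard variational/compactness theory for prescribed-mean-curvature equations with contact-angle data (as used in \cite{Lan:89,LS1}), since the generalized Concus--Finn condition $|\gamma_g-\tfrac\pi2|\le\alpha+\epsilon$ with $\alpha<\pi/2$ guarantees the requisite a priori height and gradient bounds on compact subsets of $\overline\Omega\setminus\{\mathcal O\}$; the comparison with $\psi_\rho$ from below and with a symmetric supersolution (e.g.\ $-\psi_\rho$ plus a constant, or a large sphere) from above pins $g$ between barriers that both diverge appropriately, yielding $\lim_{(x,y)\to\mathcal O}g(x,y)=+\infty$.

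The key estimate (\ref{SUP}), $\sup_{\Omega_\delta}|g-h|<\eta$, I would obtain by a comparison argument on $\Omega_\delta=\overline\Omega\setminus B_\delta(\mathcal O)$: both $g$ and $h$ solve (\ref{CAP}), and on $\partial\Omega_\delta$ they differ only on the arc $\Omega\cap\partial B_\delta$; choosing $\rho$ small forces $g$ close to $h$ on that arc (the barrier's influence decays, or one inserts an intermediate buffer annulus on which $g$ is trapped between $h\pm\eta/2$), and then the maximum principle for the uniformly elliptic operator $u\mapsto{\rm div}(Tu)-\tfrac12 u$ — which has the right sign for comparison — propagates $|g-h|<\eta$ throughout $\Omega_\delta$. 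I expect the main obstacle to be the simultaneous bookkeeping near the corner: arranging one single barrier $\psi_\rho$ that (i) genuinely blows up, (ii) keeps its contact angle inside the narrow $(\alpha+\epsilon)$-window on \emph{both} line segments $\partial^\pm\Omega^*$, and (iii) is small enough on $\partial B_\rho$ to not disturb the $\eta$-closeness, all while $\rho$ shrinks — the interplay between how fast $\psi_\rho$ can grow and how little it may perturb the outer region is the delicate quantitative point, and matching it against the Concus--Finn angle constraint is where the convexity hypothesis $\alpha<\pi/2$ is used essentially.
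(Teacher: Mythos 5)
Your outline (modify the contact angle near the corner, use a divergent comparison surface there, and control the solution away from the corner by a maximum principle) is in the same spirit as the paper, but the mechanism at the corner is stated backwards, and this is a genuine gap. A lower barrier that blows up at ${\cal O}$ and has contact angle $\gamma$ on both segments $\partial^{\pm}\Omega^{*}$ exists precisely when $\gamma$ \emph{violates} the Concus--Finn condition at the convex corner, i.e. $\gamma<\frac{\pi}{2}-\alpha$: the Concus--Finn surface $w(r\cos\theta,r\sin\theta)=\frac{\cos\theta-\sqrt{k^{2}-\sin^{2}\theta}}{k\kappa r}$ used in the paper requires $k=\sin\alpha\sec\gamma<1$. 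It is not true that such a barrier can be arranged ``with any prescribed near-$\pi/2$ contact angle''; if the modified angle stayed in the Concus--Finn range $\vert\gamma-\frac{\pi}{2}\vert<\alpha$, then (by Proposition \ref{ONE} and Concus--Finn theory) the solution would stay bounded at ${\cal O}$ and no blow-up could be forced. Likewise, your claims that the window $(\pi-2\alpha,\pi+2\alpha)$ contains an $(\alpha+\epsilon)$-neighborhood of $\frac{\pi}{2}$ and that $\vert\gamma_{g}-\frac{\pi}{2}\vert\le\alpha+\epsilon$ yields a priori height bounds near the corner are false, and if they were true they would contradict the conclusion $g\to+\infty$. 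The paper's proof commits to the extreme admissible value: it replaces $\gamma_{0}$ by $\frac{\pi}{2}-\alpha-\epsilon$ on a small set $\overline{B_{\beta}({\cal O})}$, solves (\ref{CAP})--(\ref{CAPBC}) exactly with this data to get $g_{\beta}$, and obtains $g_{\beta}\ge w-M$ from the comparison principle ($w$ is a subsolution of (\ref{CAP}) near ${\cal O}$ with the matching contact angle, and the ordering is imposed on $\Omega\cap\partial B_{\delta_{1}}({\cal O})$). Your version instead poses an overdetermined problem (Dirichlet data $g=h$ together with a contact angle on the same boundary portion) and tries to ``force $g\ge\psi_{\rho}$'' as a constraint; that inequality must be a consequence of comparison, not an imposed condition. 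Also, pinning $g$ between $\psi_{\rho}\to+\infty$ from below and $-\psi_{\rho}+C\to-\infty$ from above is impossible; only the lower barrier is needed for $\lim g=+\infty$.

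The second gap is the estimate $\sup_{\Omega_{\delta}}\vert g-h\vert<\eta$. Saying that ``the barrier's influence decays'' or that one can insert a buffer annulus on which $g$ is trapped between $h\pm\eta/2$ is circular: that trapping is exactly what has to be proved. The paper's argument (following the proof of Theorem 3 of \cite{LS1}) is continuous dependence: because the contact angle is altered only on $\overline{B_{\beta}({\cal O})}$, the solutions $g_{\beta}$ converge to $h$ pointwise and in the $C^{1}$ norm on $\overline{\Omega_{\delta}}$ as $\beta\to0$, and one then fixes $\beta$ small enough that (\ref{SUP}) holds. Your maximum-principle propagation inside $\Omega_{\delta}$ is reasonable in spirit (the $-\frac{1}{2}u$ term supplies the monotonicity needed for comparison with the same contact angle data on $\partial\Omega\setminus B_{\delta}({\cal O})$, though the operator is not uniformly elliptic), but without the shrinking-perturbation convergence step you have no control of $g-h$ on the arc $\Omega\cap\partial B_{\delta}({\cal O})$, so the argument does not close.
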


\begin{proof}
Let $\epsilon, \eta, \delta,\Omega,h$  and $\gamma_0$ be given. For $\beta\in(0,\delta)$, let 
$g_{\beta}\in C^{2}\left(\Omega)\cap C^{1}(\overline{\Omega}\setminus\{ {\cal O}\}\right)$  satisfy (\ref{CAP}) and (\ref{CAPBC}) 
with $\gamma=\gamma_{\beta},$  where 
\[
\gamma_{\beta}= \left\{ \begin{array}{ccc} 
\frac{\pi}{2}-\alpha-\epsilon & {\rm on} & \overline{B_{\beta}\left({\cal O}\right)}\\
\gamma_{0} & {\rm on} &  \overline{\Omega}\setminus B_{\beta}\left({\cal O}\right).\\ 
\end{array}
\right.
\]
As in the proof of Theorem 3 of \cite{LS1}, $g_{\beta}$  converges to $h,$  pointwise and uniformly in the $C^{1}$  norm on 
$\overline{\Omega_{\delta}}$  as $\beta$  tends to zero.  
Fix $\beta>0$  small enough that  $\sup_{\Omega_{\delta}}  \vert g-h\vert<\eta.$  

Set $\Sigma= \{(r\cos(\theta),r\sin(\theta)): r>0, -\alpha\le \theta \le \alpha \}.$  
Now define $w:\Sigma\to \Real$  by 
\[
w(r\cos \theta,r\sin\theta) = \frac{\cos\theta-\sqrt{k^{2}-\sin^{2}\theta}}{k\kappa r},
\]
where $k=\sin\alpha \sec\left(\frac{\pi}{2}-\alpha-\epsilon\right) = \sin\alpha \csc(\alpha+\epsilon).$
As in \cite{CF}, there exists a $\delta_{1}>0$  such that ${\rm div}(Tw)-\frac{1}{2}w\ge 0$  on $\Sigma\cap B_{\delta_{1}}({\cal O}),$  
$Tw\cdot\nu=\cos\left(\frac{\pi}{2}-\alpha-\epsilon\right)$  on $\partial\Sigma \cap B_{\delta_{1}}({\cal O}),$  and   
$\lim_{r\to 0^{+}} w(r\cos \theta,r\sin\theta) = \infty$  for each $\theta\in[-\alpha,\alpha].$  
We may assume $\delta_{1} \le \delta^{*}.$   Let 
\[
M=\sup_{\Omega\cap \partial B_{\delta_{1}}({\cal O})} |w-g_{\beta}| \ \ \ {\rm and} \ \ \ w_{\beta}=w-M.
\]
Since ${\rm div}(Tw_{\beta})-\frac{1}{2}w_{\beta}\ge \frac{M}{2}\ge 0={\rm div}(Tg_{\beta})-\frac{1}{2}g_{\beta}$  
in $\Omega\cap B_{\delta_{1}}({\cal O}),$   $w_{\beta}\le g_{\beta}$  on $\Omega\cap\partial B_{\delta_{1}}({\cal O})$  
and $Tg_{\beta}\cdot\nu\ge Tw_{\beta}\cdot\nu$  on $\partial\Omega\cap B_{\delta_{1}}({\cal O}),$  
we see that $g_{\beta}\ge w_{\beta}$  on $\Omega\cap\partial B_{\delta_{1}}({\cal O}).$  
\end{proof}

\noindent  We may now prove Theorem \ref{THREE}.

\begin{proof}
We shall construct a sequence ${f_n}$  of solutions of (\ref{CAP}) and a sequence $\{r_{n}\}$  of positive real numbers 
such that $\lim_{n\to\infty} r_{n}=0,$  $f_{n}(x,y)$  is even in $y$  and  
\[
(-1)^{j}f_{n}\left(r_{j},0\right)>1 \ \ \ \ {\rm for \ each \ } j=1,\dots,n.
\]

Let $\gamma_{0}=\frac{\pi}{2}$  and $f_{0}=0.$   Set $\eta_{1}=1$  and $\delta_{1}=\delta_{0}.$  
From Lemma \ref{LEM}, there exists a $f_{1}\in C^2(\Omega)\cap C^1(\overline{\Omega}\setminus\{ {\cal O}\})$  which satisfies (\ref{CAP})
such that $\sup_{\Omega_{\delta_{1}}} \vert f_{1}-f_{0}\vert<\eta_{1},$  $\left\vert\gamma_{1}-\frac{\pi}{2}\right\vert\le \alpha+\epsilon$  
and $\lim_{\Omega\ni (x,y)\to {\cal O}} f_{1}(x,y)=-\infty,$  where $\gamma_{1}=\cos^{-1}\left(Tf_{1}\cdot \nu \right).$  
Then there exists $r_{1}\in \left(0,\delta_{1}\right)$  such that $f_{1}\left(r_{1},0\right)<-1.$  

Now set $\eta_{2}=-\left(f_{1}\left(r_{1},0\right)+1\right)>0$  and $\delta_{2}=r_{1}.$  
From Lemma \ref{LEM}, there exists a $f_{2}\in C^2(\Omega)\cap C^1(\overline{\Omega}\setminus\{ {\cal O}\})$  which satisfies (\ref{CAP})
such that $\sup_{\Omega_{\delta_{2}}} \vert f_{2}-f_{1}\vert<\eta_{2},$  $\left\vert\gamma_{2}-\frac{\pi}{2}\right\vert\le \alpha+\epsilon$  
and $\lim_{\Omega\ni (x,y)\to {\cal O}} f_{2}(x,y)=\infty,$  where $\gamma_{2}=\cos^{-1}\left(Tf_{2}\cdot \nu \right).$  
Then there exists $r_{2}\in \left(0,\delta_{2}\right)$  such that $f_{2}(r_{2},0)>1.$  
Since $(r_{1},0)\in \Omega_{\delta_{2}},$  
\[
f_{1}\left(r_{1},0\right)+1<f_{2}\left(r_{1},0\right)-f_{1}\left(r_{1},0\right)<-\left(f_{1}\left(r_{1},0\right)+1 \right)
\]
and so $f_{2}\left(r_{1},0\right)<-1.$

Next set $\eta_{3}=\min\left\{-\left(f_{2}\left(r_{1},0\right)+1\right), f_{2}\left(r_{2},0\right)-1\right\}>0$  
and $\delta_{3}=r_{2}.$  
From Lemma \ref{LEM}, there exists a $f_{3}\in C^2(\Omega)\cap C^1(\overline{\Omega}\setminus\{ {\cal O}\})$  which satisfies (\ref{CAP})
such that $\sup_{\Omega_{\delta_{3}}} \vert f_{3}-f_{2}\vert<\eta_{3},$  $\left\vert\gamma_{3}-\frac{\pi}{2}\right\vert\le \alpha+\epsilon$  
and $\lim_{\Omega\ni (x,y)\to {\cal O}} f_{3}(x,y)=-\infty,$  where $\gamma_{3}=\cos^{-1}\left(Tf_{3}\cdot \nu \right).$  
Then there exists $r_{3}\in \left(0,\delta_{3}\right)$  such that $f_{3}(r_{3},0)<-1.$  
Since $(r_{1},0),(r_{2},0)\in \Omega_{\delta_{2}},$  we have  
\[
f_{2}\left(r_{1},0\right)+1<f_{3}\left(r_{1},0\right)-f_{2}\left(r_{1},0\right)<-\left(f_{2}\left(r_{1},0\right)+1\right)
\]  
and  
\[
-\left(f_{2}\left(r_{2},0\right)-1\right)<f_{3}\left(r_{2},0\right)-f_{2}\left(r_{2},0\right)<f_{2}\left(r_{2},0\right)-1; 
\]
hence $f_{3}\left(r_{1},0\right)<-1$  and $1<f_{3}\left(r_{2},0\right).$

Continuing to define $f_{n}$ and $r_{n}$ inductively,  we set 
\[
\eta_{n+1} = \min_{1\leq j\leq n}\vert f_n(r_j,0)-(-1)^j \vert \ \ \ {\rm and} \ \ \  
\delta_{n+1}=\min\left\{r_{n},\frac{1}{n}\right\}.
\]
From Lemma  \ref{LEM}, there exists $f_{n+1}\in C^2(\Omega)\cap C^1(\overline{\Omega}\setminus\{ {\cal O}\})$  which satisfies (\ref{CAP})  
such that $\sup_{\Omega_{\delta_{n+1}}} \vert f_{n+1}-f_{n}\vert<\eta_{n+1},$  
$\left\vert\gamma_{n+1}-\frac{\pi}{2}\right\vert\le \alpha+\epsilon$  
and $\lim_{\Omega\ni (x,y)\to {\cal O}} f_{n+1}(x,y)=(-1)^{n+1}\infty,$  where $\gamma_{n+1}=\cos^{-1}\left(Tf_{n+1}\cdot \nu \right).$  
Then there exists $r_{n+1}\in  \left(0,\delta_{n+1}\right)$  such that $(-1)^{n+1}f_{n+1}(r_{n+1},0)>1.$  
For each $j\in \{1,\dots,n\}$  which is an even number, we have 
\[
-\left(f_{n}\left(r_{j},0\right)-1\right)<f_{n+1}\left(r_{j},0\right)-f_{n}\left(r_{j},0\right)<f_{n}\left(r_{j},0\right)-1
\]
and so $1<f_{n+1}\left(r_{j},0\right).$  For each $j\in \{1,\dots,n\}$  which is an odd number, we have
\[
f_{n}\left(r_{j},0\right)+1<f_{n+1}\left(r_{j},0\right)-f_{n}\left(r_{j},0\right)<-\left(f_{n}\left(r_{j},0\right)+1\right)
\]
and so $f_{n+1}\left(r_{j},0\right)<-1.$

As in \cite{LS1,Siegel}, there is a subsequence of $\{f_{n}\},$   still denoted  $\{f_{n}\},$  which converges pointwise and 
uniformly in the $C^{1}$  norm on $\overline{\Omega_{\delta}}$  for each $\delta>0$  as $n\to\infty$  to a solution 
$f\in C^2(\Omega)\cap C^1\left(\overline\Omega\setminus {\cal O}\right)$  of  (\ref{CAP}).
For each $j\in\Natural$  which is even, $f_{n}\left(r_{j},0\right)>1$  for each $n\in\Natural$  and so $f\left(r_{j},0\right)\ge 1.$  
For each $j\in\Natural$  which is odd, $f_{n}\left(r_{j},0\right)<-1$  for each $n\in\Natural$  and so $f\left(r_{j},0\right)\le -1.$
Therefore
\[
\lim_{r\to 0^{+}} f(r,0)  \ \ {\rm does \ not\ exist, \ even \ as\ an \ infinite\ limit},
\]
and so $Rf(0)$  does not exist.

Since $\Omega$  is symmetric with respect to the $x-$axis and $\gamma_{n}(x,y)$  is an even function of $y,$  
$f(x,y)$  is an even function of $y.$ 
Now suppose that there exists $\theta_{0}\in [-\alpha,\alpha]$  such that $Rf(\theta_0)$  exists; then  $\theta_{0}\neq 0.$  
From the symmetry of $f,$   $Rf(-\theta_{0})$  must also exist and $Rf(-\theta_{0})=Rf(\theta_{0})$. 
Set $\Omega'=\{(r\cos\theta,r\sin\theta): 0<r<\delta_{0},-\theta_{0}<\theta<\theta_{0}\}\subset \Omega.$ 
Since $f$  has continuous boundary values on $\partial\Omega',$   $f\in C^{0}\left(\overline{\Omega'}\right)$  
and so $Rf(0)$  does exist, which is a contradiction.  Thus $Rf(\theta)$  does not exist for any $\theta\in [-\alpha,\alpha].$
\end{proof}

\end{document}